\newcommand{\eps}{\varepsilon}
\newtheorem{theorem}{Theorem}
\newtheorem{lemma}{Lemma}
\newtheorem{corollary}{Corollary}
\newcommand{\bE}{\mathbb{E}}
\newcommand{\cL}{\mathcal{L}}
\newcommand{\cF}{\mathcal{F}}
\begin{document}
\title{From discrete flow of Beckner to continuous flow of Janson in complex hypercontractivity}
\author[P.~Ivanisvili, A.~Volberg]{P.~Ivanisvili,  A.~Volberg}
\thanks{A.~Volberg is partially supported by the NSF DMS-1600065.  This paper is  based upon work supported by the National Science Foundation under Grant No. DMS-1440140 while two of the authors were in residence at the Mathematical Sciences Research Institute in Berkeley, California, during the Spring and Fall  2017 semester. }
\address{Department of Mathematics, Princeton University; MSRI; UC Irvine, CA, USA}
\email{paata.ivanisvili@princeton.edu \textrm{(P.\ Ivanisvili)}}


\address{Department of Mathematics, Michigan State University, East Lansing, MI 48823, USA}
\email{volberg@math.msu.edu \textrm{(A.\ Volberg)}}
\makeatletter
\@namedef{subjclassname@2010}{
  \textup{2010} Mathematics Subject Classification}
\makeatother
\subjclass[2010]{42B20, 42B35, 47A30}
\keywords{}
\begin{abstract} 
We show how Beckner's montonicity result on Hamming cube easily implies the monotonicity of a  flow introduced by Janson in Hausdorff--Young inequality
\end{abstract}
\maketitle






%
%



\date{}

\setcounter{equation}{0}
\setcounter{theorem}{0}

\section{Complex hypercontractivity: discrete and continuous monotonicity }
\label{Intro}

For  $1 \leq p \leq q  <\infty$ and $|z|\leq 1$ with  $z \in \mathbb{C}$ the complex hypercontractivity 
\begin{align*}
 (\mathbb{E} | T_{z} f|^{q})^{1/q} \leq  (\mathbb{E} |f|^{p})^{1/p} \quad \text{for all} \quad f :\{-1,1\}^{n} \to \mathbb{C} 
\end{align*}
where  $T_{z} f(x) = \sum_{S \subset \{1,\ldots, n\}} z^{|S|} \hat{f}(S) W_{S}(x)$, $W_{S}(x)=\prod_{j\in S}x_{j}$, $x=(x_{1}, \ldots, x_{n})\in \{-1,1\}^{n}$  is equivalent to its two-point inequality 
\begin{align}\label{basis}
\left(\frac{|a+zb|^{q}+|a-zb|^{q}}{2}\right)^{1/q} \leq \left(\frac{|a+b|^{p}+|a-b|^{p}}{2}\right)^{1/p} \quad \text{for all} \quad a,b \in \mathbb{C}, 
\end{align}
which is conjectured to be equivalent to its infinitesimal form, i.e.,  
\begin{align*}
(q-2) (\Re w z)^{2} + |w z|^{2} \leq (p-2) (\Re w)^{2} + |w|^{2}, \quad \forall w \in \mathbb{C}, 
\end{align*}
(the only open case is when $2 < p <q < 3$ and its dual~\cite{Weissler}). 

The proof of hypercontractivity goes as follows
\begin{align}
(\mathbb{E} |T_{z} f|^{q})^{p/q}  &= \nonumber\\
 &=\left( \mathbb{E}_{n-1} \mathbb{E}^{1} |T_{z} f|^{q}\right)^{p/q} \; \stackrel{(\ref{basis})}{\leq}  
\left( \mathbb{E}_{n-1}  \left( \mathbb{E}^{1}|T^{1}_{z}f|^{p} \right)^{q/p}\right)^{p/q} \; \stackrel{\text{minkowski}}{\leq}  \mathbb{E}^{1}  \left( \mathbb{E}_{n-1}|T^{1}_{z}f|^{q} \right)^{p/q}  \label{bili}\\
&\stackrel{\text{induction}}{\leq} \mathbb{E} |f|^{p}.\nonumber
\end{align}
Here $\mathbb{E}_{n-1}, \mathbb{E}^{1}$ averages in the last $n-1$ and the first variable correspondingly; $T_{z}^{1}$ {\em removes} one $z$ from the $x_{1}$ variable in the formula for $T_{z}$. In general we define $T_{z}^{k} W_{S}(x) = z^{|S\cap\{k+1,\ldots, n\}|}W_{S}(x)$ for $1\leq  k< n$, i.e., $T_{z}^{k}$ multiplies only variables $x_{k+1}, \ldots, x_{n}$ by $z$. We set  $T_{z}^{0}=T_{z}$, $T_{z}^{n}W_{S}(x)=W_{S}(x)$, and we extend $T_{z}^{k}$ linearly.  Introduce the symmetric functions 
\begin{align*}
 \varphi_{\ell} (x_{1}, \ldots, x_{n}) = \ell! \sum_{1\leq m_{1}<m_{2}<\ldots<m_{\ell}\leq n}x_{m_{1}}\ldots x_{m_{\ell}}.
\end{align*}
Clearly   
\begin{align*}
T_{z}^{k} \varphi_{\ell}  (x_{1}, \ldots, x_{n}) = \varphi_{\ell} (x_{1}, \ldots, x_{k}, z x_{k+1}, \ldots, z x_{n}), \quad \text{for} \quad 1\leq k <n.  
\end{align*}
Inducting (\ref{bili}) implies that the following discrete map is monotonically increasing in $k$ (Beckner~\cite{B}): 
\begin{align}\label{izrdeba}
k \to \mathbb{E}^{k} (\mathbb{E}_{n-k} | T_{z}^{k} f|^{q})^{p/q}, \quad 0 \leq k \leq n.
\end{align}
We will show that (\ref{izrdeba}) implies that the following map  is increasing:
\begin{align}\label{mon2}
s \to \int_{\mathbb{R} }\left( \int_{\mathbb{R}} \left| \int_{\mathbb{R}} \int_{\mathbb{R}} g((u+iv)\sqrt{s} + z (x+iy)\sqrt{1-s}) d\gamma(v) d\gamma(y) \right|^{q} d\gamma(x)\right)^{p/q}d\gamma(u), \quad 0\leq s \leq 1,
\end{align}
where $d\gamma(x)=\frac{e^{-x^{2}/2}}{\sqrt{2\pi}}dx$ and $g$ is a polynomial. The monotonicity of (\ref{mon2}) was proved in \cite{SJ} and later in \cite{Hu} by direct  but tedious differentiation that takes  a little bit of time to achieve though it is nontrivial to guess. We show that it follows from Beckner's paper \cite{B} directly.

\section{From discrete monotonicity to continuous monotonicity}
\label{DtoC}


We will need the following lemma of Beckner~\cite{B}.
\begin{lemma}[Beckner~\cite{B}]
\label{BeL}
Let $x\in \{-1,1\}^{n}$.  We have
$$
\varphi_{\ell} \left( \frac{x_1}{\sqrt{n}},\dots, \frac{x_n}{\sqrt{n}}\right) = H_\ell\left(\frac{x_1+ \dots + x_n}{\sqrt{n}}\right)  + \frac1{n}  \sum_{r=1}^{[\ell/2]} a_{r, \ell}H_{\ell-2r}\left( \frac{x_1+\dots+x_n}{\sqrt{n}}\right)\,,
$$
where the coefficients $a_{r, \ell}$ are bounded with respect to $n$ for each fixed $\ell$, and $H_m$ are Hermite polynomials:
$$
H_m(x):= \int_{\mathbb{R}} (x+iy)^m d\gamma(y)\,.
$$
\end{lemma}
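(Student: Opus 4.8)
The plan is to avoid generating functions and instead derive for $\varphi_\ell(x_1/\sqrt n,\dots,x_n/\sqrt n)$ a three-term recursion that coincides, up to an $O(1/n)$ perturbation, with the classical three-term recursion $H_\ell(w)=w\,H_{\ell-1}(w)-(\ell-1)H_{\ell-2}(w)$ for the probabilists' Hermite polynomials, and then to run an induction on $\ell$. Fix $x\in\{-1,1\}^n$, write $S=x_1+\dots+x_n$ and $w=S/\sqrt n$. The crucial simplification is that, since $x_j^2=1$, the power sums collapse: $p_i:=\sum_j x_j^i$ equals $n$ for $i$ even and $S$ for $i$ odd, so $p_i(x/\sqrt n)=n^{1-i/2}$ for $i$ even and $p_i(x/\sqrt n)=w\,n^{(1-i)/2}$ for $i$ odd. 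Substituting this into Newton's identity $\ell e_\ell=\sum_{i=1}^{\ell}(-1)^{i-1}e_{\ell-i}p_i$ (with $\varphi_j=j!\,e_j$) and clearing factorials gives, for every $n\ge\ell$,
$$
\varphi_\ell(x/\sqrt n)=w\,\varphi_{\ell-1}(x/\sqrt n)-(\ell-1)\,\varphi_{\ell-2}(x/\sqrt n)+\tfrac1n\,E_\ell,
$$
where $E_\ell$ is an explicit linear combination — with coefficients of the form $\pm\tfrac{(\ell-1)!}{(\ell-i)!}\,n^{-m}$, $m\ge0$, hence bounded uniformly in $n$ — of the quantities $\varphi_{\ell'}(x/\sqrt n)$ and $w\,\varphi_{\ell'}(x/\sqrt n)$ with $\ell'\le\ell-3$. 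The first two terms are precisely the Hermite recursion, which is what makes the induction go.

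First I would prove by induction on $\ell$ the statement: $\varphi_\ell(x/\sqrt n)=\sum_{0\le m\le\ell,\ m\equiv\ell\,(\mathrm{mod}\ 2)}c_{m,\ell}(n)\,H_m(w)$ with $c_{\ell,\ell}(n)=1$ and, for $m<\ell$, with $n\,c_{m,\ell}(n)$ bounded uniformly in $n$; the Lemma then follows with $a_{r,\ell}:=n\,c_{\ell-2r,\ell}(n)$. The restriction to indices $m\equiv\ell\pmod 2$ is immediate from $\varphi_\ell(-x)=(-1)^\ell\varphi_\ell(x)$ together with $w\mapsto-w$ under $x\mapsto-x$; the base cases $\varphi_0=H_0$, $\varphi_1(x/\sqrt n)=w=H_1(w)$ are trivial. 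For the inductive step I substitute the inductive expansions of $\varphi_{\ell-1}$ and $\varphi_{\ell-2}$ into the recursion and use $w\,H_m(w)=H_{m+1}(w)+m\,H_{m-1}(w)$. The leading contribution $w\cdot 1\cdot H_{\ell-1}=H_\ell+(\ell-1)H_{\ell-2}$ supplies the required $H_\ell$ with coefficient $1$, and its unwanted $(\ell-1)H_{\ell-2}$ is cancelled exactly by the $H_{\ell-2}$-part of $-(\ell-1)\varphi_{\ell-2}=-(\ell-1)H_{\ell-2}+(\text{lower order})$. Everything that survives is either the product of a previously established $O(1/n)$ coefficient with the $O(1)$ constants from the Hermite recursion, or comes from $\tfrac1n E_\ell$ (bounded coefficients times inductively bounded Hermite expansions of $\varphi_{\ell'}(x/\sqrt n)$, $\ell'\le\ell-3$); in either case it carries a factor $1/n$ and lives in $\mathrm{span}\{H_{\ell-2},H_{\ell-4},\dots\}$.

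The main obstacle is the \emph{simultaneous} bookkeeping in the inductive step: showing that the accumulated correction genuinely carries a factor $1/n$ — not merely $O(n^{-1/2})$, which is all a careless size bound on a single term $p_i(x/\sqrt n)$ would give — and that it has no $H_{\ell-1},H_{\ell-3},\dots$ components. The first point is exactly where the even/odd split of the power sums is used: the odd $p_i$'s produce the factor $w\,n^{(1-i)/2}$ whose "$w$" can itself be of order $n^{1/2}$, so the genuinely small parameter is $n^{(1-i)/2}\cdot n^{1/2}=n^{1-i/2}$, which is $O(1/n)$ precisely for $i\ge3$ — while the $i=1,2$ terms form the Hermite recursion and their would-be $O(1)$ surplus cancels. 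The second point is the parity observation. Once both are in place, boundedness of the $a_{r,\ell}$ is automatic, since each $c_{m,\ell}(n)$ ends up a polynomial in $1/n$ with coefficients depending only on $\ell$. (As consistency checks one gets $\varphi_2(x/\sqrt n)=H_2(w)$, $\varphi_3(x/\sqrt n)=H_3(w)+\tfrac2n H_1(w)$, $\varphi_4(x/\sqrt n)=H_4(w)+\tfrac1n(8H_2(w)+2H_0(w))$; alternatively one can verify the same conclusion from the closed form $\prod_{j=1}^n\bigl(1+\tfrac{tx_j}{\sqrt n}\bigr)=\bigl(1-\tfrac{t^2}{n}\bigr)^{n/2}\exp\!\bigl(S\operatorname{arctanh}(t/\sqrt n)\bigr)=e^{wt-t^2/2}\exp\!\bigl(\tfrac1n(w\psi(t,n)-\chi(t,n))\bigr)$, with $\psi,\chi$ power series in $t$ starting at $t^3,t^4$ and having $n$-bounded coefficients, by reading off the coefficient of $t^\ell$.)
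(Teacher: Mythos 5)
The paper does not actually prove Lemma~\ref{BeL}; it cites Beckner's 1975 paper and uses the statement as a black box, so there is no in-paper proof to compare against. Your Newton's-identity argument is correct and self-contained, and it fills this gap cleanly. The two pivotal observations are both sound and both necessary: (i) on $\{-1,1\}^n$ the power sums collapse so that the $i=1,2$ terms of $\varphi_\ell=\sum_{i\ge1}(-1)^{i-1}\tfrac{(\ell-1)!}{(\ell-i)!}\varphi_{\ell-i}\,p_i$ reproduce exactly the probabilists' Hermite recursion $H_\ell=wH_{\ell-1}-(\ell-1)H_{\ell-2}$, while every $i\ge3$ term carries a genuine factor $n^{-1}$ (times either $1$ or $w$, with the extra $w$ then being absorbed by $wH_m=H_{m+1}+mH_{m-1}$ into the inductively controlled coefficients); and (ii) the parity of $\varphi_\ell$ under $x\mapsto -x$ forbids the odd-gap Hermite terms. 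The strengthened inductive hypothesis, that $c_{\ell,\ell}(n)=1$ and $n\,c_{m,\ell}(n)$ is uniformly bounded for $m<\ell$, is exactly what is needed to close the induction, and your small-$\ell$ checks ($\varphi_2=H_2$, $\varphi_3=H_3+\tfrac2nH_1$, $\varphi_4=H_4+\tfrac1n(8H_2+2H_0)$) are right. The generating-function alternative you sketch in the parenthetical — $\log\prod_j(1+tx_j/\sqrt n)=\tfrac n2\log(1-t^2/n)+S\,\mathrm{arctanh}(t/\sqrt n)=wt-\tfrac{t^2}2+\tfrac1n(w\psi-\chi)$ — is also correct, is closer in spirit to what one finds in Beckner's original source, and is the faster route if one is willing to re-expand $w^k H_m(w)$ in the Hermite basis; your recursion proof has the virtue of making the $O(1/n)$ bookkeeping completely mechanical at each step. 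I see no gaps.
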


\bigskip

Now for a fixed $T>0$ consider the scaled Hamming ball
$$
B_{n}(T):=\left\{x\in \{-1,1\}^n\, :\,  \left|\frac{x_1+\dots+ x_n}{\sqrt{n}}\right| \le T\right\}\,.
$$
We claim 
\begin{lemma}
\label{Tz}
Let $k \asymp n$,  $n \to \infty$. Uniformly for $x=(x',x'')\in \{-1,1\}^n$ such that $x'=(x_1,\dots, x_k)\in B_{k}(T)$ and $x''=(x_{k+1}, \dots, x_n) \in B_{n-k}(S)$  we have
\begin{align*}
&\mathbb{E}^{y}\left(\frac{x_{1}+\ldots+x_{k}}{\sqrt{n}}+i \frac{y_{1}+\ldots+y_{k}}{\sqrt{n}}+z \left[\frac{x_{k+1}+\ldots+x_{n}}{\sqrt{n}}+i \frac{y_{k+1}+\ldots+y_{n}}{\sqrt{n}} \right]\right)^{L} =\\
&\varphi_{L}\left( \frac{x_{1}}{\sqrt{n}}, \ldots,\frac{x_{k}}{\sqrt{n}}, z\frac{x_{k+1}}{\sqrt{n}}, \ldots, z\frac{x_{n}}{\sqrt{n}}\right) 
+O_{L, T, S}\left(\frac1{\sqrt{n}}\right)
\end{align*}
where $\mathbb{E}^{y}$ takes the average in the variable  $y = (y_{1}, \ldots, y_{n})\in \{-1,1\}^{n}$.
\end{lemma}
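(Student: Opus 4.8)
The plan is to expand the $L$th power by the binomial theorem, reduce everything to a one--block moment computation, and control that computation by the multinomial theorem together with Beckner's Lemma~\ref{BeL}. Write $U:=\sum_{m=1}^{k}(x_m+iy_m)$ and $V:=\sum_{m=k+1}^{n}(x_m+iy_m)$, so that the argument of $\mathbb{E}^{y}$ on the left is $n^{-L/2}(U+zV)^{L}$. Since $z$ is a scalar while $U$ and $V$ involve the disjoint blocks $y'=(y_1,\dots,y_k)$, $y''=(y_{k+1},\dots,y_n)$ of the $y$--coordinates, independence gives
\begin{equation*}
\mathbb{E}^{y}\big[(U+zV)^{L}\big]=\sum_{j=0}^{L}\binom{L}{j}z^{\,L-j}\,\mathbb{E}^{y'}\big[U^{j}\big]\,\mathbb{E}^{y''}\big[V^{\,L-j}\big].
\end{equation*}
On the other hand $\varphi_\ell$ is symmetric and homogeneous of degree $\ell$, and the elementary splitting identity $\varphi_L(\{a_m\}\cup\{b_m\})=\sum_{j}\binom{L}{j}\varphi_j(\{a_m\})\varphi_{L-j}(\{b_m\})$ (count how the chosen indices fall into the two groups) applied with $a_m=x_m/\sqrt n$ for $m\le k$ and $b_m=zx_{m}/\sqrt n$ for $m>k$ rewrites $\varphi_L(x_1/\sqrt n,\dots,x_k/\sqrt n,zx_{k+1}/\sqrt n,\dots,zx_n/\sqrt n)$ as $n^{-L/2}\sum_{j}\binom{L}{j}z^{\,L-j}\varphi_j(x')\varphi_{L-j}(x'')$. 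Comparing, the lemma reduces to the two single--block estimates, for all $0\le j\le L$,
\begin{equation*}
\mathbb{E}^{y'}\big[U^{j}\big]=\varphi_j(x')+O_{L,T}\big(k^{(j-2)/2}\big),\qquad \mathbb{E}^{y''}\big[V^{j}\big]=\varphi_j(x'')+O_{L,S}\big((n-k)^{(j-2)/2}\big),
\end{equation*}
together with the a priori sizes $\varphi_j(x')=O_{L,T}(k^{j/2})$ and $\varphi_j(x'')=O_{L,S}((n-k)^{j/2})$.

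These two inputs suffice: writing $\mathbb{E}^{y'}[U^j]=\varphi_j(x')+E_j$ and $\mathbb{E}^{y''}[V^{L-j}]=\varphi_{L-j}(x'')+F_{L-j}$, where $E_j=F_j=0$ for $j\le 1$, the three cross terms $E_j\varphi_{L-j}(x'')$, $\varphi_j(x')F_{L-j}$ and $E_jF_{L-j}$ are each $O_{L,T,S}(n^{(L-2)/2})$ (use $k,\,n-k\le n$ together with the displayed estimates), so after the prefactor $n^{-L/2}$ one is left with $O_{L,T,S}(n^{-1})\subset O(n^{-1/2})$. The a priori sizes themselves are read off from Beckner's Lemma applied with $k$ (resp.\ $n-k$) in place of $n$: on $B_k(T)$ the argument $(x_1+\dots+x_k)/\sqrt k$ lies in $[-T,T]$, so $\varphi_j(x_1/\sqrt k,\dots,x_k/\sqrt k)=H_j(\,\cdot\,)+O(1/k)$ is $O_{L,T}(1)$, hence $\varphi_j(x')=k^{j/2}\,O_{L,T}(1)$.

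For the single--block estimate I would expand $U^{j}=\big(\sum_{m=1}^{k}(x_m+iy_m)\big)^{j}$ by the multinomial theorem and group the index tuples $\vec{m}\in\{1,\dots,k\}^{j}$ by the set partition $\pi$ of $\{1,\dots,j\}$ recording which slots carry equal indices; the tuples with a given coincidence pattern $\pi$ are in bijection with the injections of the blocks of $\pi$ into $\{1,\dots,k\}$. Using independence and the elementary evaluation $\mathbb{E}^{y}[(x+iy)^{t}]=x^{t}\,2^{t/2}\cos(\pi t/4)$, valid for $x\in\{-1,1\}$ since $x^2=1$, the contribution of a partition with block sizes $t_1,\dots,t_r$, of which $r_o$ are odd, is a constant of modulus $\le 2^{j/2}$ times
\begin{equation*}
(k-r_o)(k-r_o-1)\cdots(k-r+1)\cdot\varphi_{r_o}(x')=O_{L,T}\big(k^{\,r-r_o/2}\big)
\end{equation*}
(the combinatorial factor counts placements of the even--size blocks, the odd--size blocks collapsing into $\varphi_{r_o}(x')$ via $x_m^{2}=1$; the last step is the a priori bound). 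Three observations finish the argument. First, the all--singleton partition contributes exactly $\varphi_j(x')$. Second, any partition having a block of size $\equiv 2\pmod 4$ contributes $0$, since then $\cos(\pi t/4)=0$ --- this is precisely the cancellation $\mathbb{E}^{y}[(x+iy)^2]=x^2-1=0$ on the cube, and it annihilates the type $(2,1,\dots,1)$, the only non--leading type whose order $k^{j/2}$ would otherwise match that of the main term. Third, every surviving non--leading partition obeys $r-r_o/2\le(j-2)/2$: a surviving even block has size $\ge 4$, so $j\ge r_o+4(r-r_o)=4r-3r_o\ge 2r-r_o+2$ whenever $r>r_o$, while if all blocks are odd then the even integer $j-r$ is positive, hence $\ge 2$. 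Summing over the $O_L(1)$ partitions yields the estimate for $U$, and the one for $V$ follows verbatim with $S$ and $n-k$ in place of $T$ and $k$.

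The step I expect to cost the real work is this last one --- enumerating the coincidence patterns and checking, uniformly in $n$ and in $x=(x',x'')$, that every non--leading pattern lies a full factor $n$ below the leading term $\varphi_j(x')$. That is exactly where both structural inputs are indispensable: the combinatorics of the multinomial expansion, so that extra index coincidences genuinely cost free summation indices, and --- crucially --- the Hamming--ball hypotheses $x'\in B_k(T)$, $x''\in B_{n-k}(S)$, which are the sole source of the cancellation $x_1+\dots+x_k=O(\sqrt n)$ that controls the residual factors $\varphi_{r_o}$ attached to odd blocks; Beckner's Lemma is invoked both for the a priori size of $\varphi_j$ and for bounding these residual symmetric sums.
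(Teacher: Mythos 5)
Your proposal is correct and follows essentially the same two-stage strategy as the paper: first split $\varphi_L$ into the two blocks via the binomial/elementary-symmetric-function identity (the paper phrases this as a reduction to the case $z=0$), then prove the single-block estimate by a multinomial expansion grouped by coincidence patterns, using the vanishing of $\mathbb{E}[(x+iy)^2]$ to kill the otherwise-leading pattern and Beckner's Lemma together with the Hamming-ball hypothesis to control the residual symmetric sums $\varphi_{r_o}$. The only genuine difference is cosmetic: where the paper reduces block sizes modulo $4$ via the step-by-step identities $(x+iy)^2=2ixy$, $(x+iy)^3=2i(y+ix)$, $(x+iy)^4=-4$, you package the same information in the single closed formula $\mathbb{E}^{y}[(x+iy)^{t}]=x^{t}2^{t/2}\cos(t\pi/4)$, which makes the vanishing at $t\equiv 2\pmod 4$ and the dichotomy between odd and even block sizes transparent at once; your bookkeeping of orders via $r-r_o/2\le(j-2)/2$ is also a little sharper than the paper's (you obtain $O(n^{-1})$ rather than just $O(n^{-1/2})$), though the lemma only needs the weaker conclusion.
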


\begin{proof}
First we explain that the case of an arbitrary $z$ follows from the case $z=0$. Indeed, 
\begin{align*}
&\mathbb{E}^{y}\left(\frac{x_{1}+\ldots+x_{k}}{\sqrt{n}}+i \frac{y_{1}+\ldots+y_{k}}{\sqrt{n}}+z \left[\frac{x_{k+1}+\ldots+x_{n}}{\sqrt{n}}+i \frac{y_{k+1}+\ldots+y_{n}}{\sqrt{n}} \right]\right)^{L}=\\
&\sum_{m=0}^{L}\binom{L}{m} \mathbb{E}^{y_{1}, \ldots, y_{k}}\left( \frac{x_{1}+\ldots+x_{k}+i(y_{1}+\ldots+y_{k})}{\sqrt{n}} \right)^{L-m} \mathbb{E}^{y_{k+1}, \ldots, y_{n}}\left(\frac{x_{k+1}+\ldots+x_{n}+i(y_{k+1}+\ldots+y_{n})}{\sqrt{n}}\right)^{m}z^{m}.
\end{align*}
Next, using the validity of the lemma when  $z=0$, we can write the latter sum as follows 
\begin{align*}
\sum_{m=0}^{L}\binom{L}{m}\varphi_{L-m}\left(\frac{x_{1}}{\sqrt{n}}, \ldots, \frac{x_{k}}{\sqrt{n}}\right)\varphi_{m}\left(\frac{x_{k+1}}{\sqrt{n}}, \ldots, \frac{x_{n}}{\sqrt{n}}\right)z^{m} + O_{L,T,S,z}\left(\frac{1}{\sqrt{n}}\right).
\end{align*}
Finally,  the following identity 
\begin{align*}
\varphi_{L}(x_{1}, \ldots, x_{k}, z x_{k+1}, \ldots, zx_{n}) = \sum_{m=0}^{L}\binom{L}{m}\varphi_{L-m}\left(x_{1}, \ldots, x_{n}\right)\varphi_{m}\left(x_{k+1}, \ldots, x_{n}\right)z^{m}
\end{align*}
finishes the proof. Thus it remains to prove the lemma only when $z=0$. We say that $P=P_{\lambda}$ partitions a natural number $\lambda$ if $P_{\lambda}=(\lambda_{1}, \ldots, \lambda_{m})$ for some $m\geq 1$,  where $\lambda_{j}$ are natural numbers, $\lambda_{j}\geq \lambda_{j+1}$ and $\lambda_{1}+\ldots+\lambda_{m}=\lambda$. By $|P_{\lambda}|$ we denote the {\em width} of the partition $P_{\lambda}$, i.e., in this case $|P_{\lambda}|=m$. Let $(X_{1}, \ldots, X_{k}) \in \mathbb{C}^{k}$. If $P_{\lambda}$ is  a partition of $\lambda$, and $\lambda \leq k$ then  by $M_{P_{\lambda}}(X_{1}, \ldots, X_{k})$ we denote the symmetric polynomials. For example 
\begin{align*}
&M_{(1,1,1)}(X_{1},X_{2},X_{3})=X_{1}X_{2}X_{3};\\
&M_{(1,1)}(X_{1},X_{2},X_{3})=X_{1}X_{2}+X_{2}X_{3}+X_{1}X_{3};\\
&M_{(3,1)}(X_{1}, X_{2}, X_{3})=X_{1}^{3}X_{2}+X_{1}^{3}X_{3}+X_{2}^{3}X_{1}+X_{2}^{3}X_{3}+X_{3}^{3}X_{1}+X_{3}^{3}X_{2}.
\end{align*}
Next, consider the multinomial expansion
\begin{align*}
n^{-L/2}\mathbb{E}^{y}\left(x_{1}+iy_{1}+\ldots+x_{k}+iy_{k}\right)^{L}=n^{-L/2}\mathbb{E}^{y}\sum_{|r|=L}\binom{L}{r} X^{r}
\end{align*}
where $r=(r_{1}, \ldots, r_{k})$ is the multiindex with nonnegative integers $r_{j}$, $|r|=r_{1}+\ldots+r_{k}$, $\binom{L}{r}:=\binom{L}{r_{1}, \ldots, r_{k}}$, and 
\begin{align*}
X^{r} := (x_{1}+iy_{1})^{r_{1}}\cdots(x_{k}+iy_{k})^{r_{k}}.
\end{align*}
Using the notations with partition numbers and symmetric polynomials we can write
\begin{align}\label{Volberg}
n^{-L/2}\mathbb{E}^{y}\sum_{|r|=L}\binom{L}{r} X^{r} = \sum_{w=1}^{L}\; \sum_{\substack{P_{L}\, \text{partitions}\, L \\ |P_{L}|=w}} \binom{L}{P_{L}} n^{-L/2}\mathbb{E}^{y} M_{P_{L}}(x_{1}+iy_{1}, \ldots, x_{k}+iy_{k})
\end{align}
When $w=L$ the only possible partition of $L$ with width $L$ is of course $P_{L}=\{\underbrace{1,\ldots, 1}_{L}\}$. Clearly in this case 
\begin{align*}
L!\,  n^{-L/2} \mathbb{E}^{y} M_{P_{L}}(x_{1}+iy_{1},\cdots,x_{k}+iy_{k}) = \varphi_{L}\left(\frac{x_{1}}{\sqrt{n}}, \ldots, \frac{x_{k}}{\sqrt{n}}\right)
\end{align*}
Let us show that all other terms $n^{-L/2}\mathbb{E}^{y} M_{P_{L}}(x_{1}+iy_{1}, \ldots, x_{k}+iy_{k})$ of the double sum (\ref{Volberg}) are of order $\frac{1}{\sqrt{n}}$ when $1\leq w\leq L-1$. Notice that 
\begin{equation}
\label{sokr}
(x_j +iy_j)^2=  2ix_jy_j,\, \,\,(x_j +iy_j)^4= -4, \,\,\,(x_j +iy_j)^3= 2i(y_j +ix_j),
\end{equation}
Using this we can reduce  each element of $P_{L}$ with $|P_{L}|=w$ to the following values $m_j=1, 2,  0$, and denote it by $P'_{L}$.  We do not care about numerical coefficients that may appear after the reduction. This is because  we cannot pick up a coefficient bigger than $4^L$, and constants depending only on $L$ are fine with us. If $P'_{L}$ contains $2$ then clearly $\mathbb{E}^{y} M_{P_{L}}=0$. Assume all elements of $P'_{L}$ are zero.  Then  there can be only at most $L/4$ zeros. It means that the total contribution from  $M_{P_{L}}$ can be  at most $C(L) n^{L/4}$ (we remind that $k \asymp n$) which after multiplication by $n^{-L/2}$ will go to zero. Finally consider the case when $P'_{L}$ contains $K$, $1\leq K\leq L-1$,  number of $1$'s. Then clearly $n^{-L/2}\mathbb{E}^{y} M_{P_{L}} = k^{-L/2}C(L,k/n) \varphi_{K}(x_{1}, \ldots, x_{k})$. By Beckner's Lemma~\ref{BeL} we have 
$$
\frac1{k^{(L-K)/2}} \varphi_{K}(x_1/\sqrt{k},\dots, x_k/\sqrt{k})= \frac1{k^{(L-K)/2}} \sum_{\nu=0}^K a_{\nu, K}H_\nu\left(\frac{x_1+\dots+x_k}{\sqrt{k}}\right)\,,
$$
where $H_\nu$ are Hermite polynomials.   Obviously, as $K\le L-1$ coefficients $a_{\nu, K}$ are bounded by certain $C'(L)$. Since $(x_{1}, \ldots, x_{k})\in B_{k}(T)$ the latter sum is of order $O_{L,T}(1/\sqrt{n})$. The lemma is proved. 
\end{proof}

We will see now
that not only Lemma \ref{Tz} holds, but moreover, that 

\begin{lemma}
\label{InAv}
If $k=k(n)$ is such that  $\lim_{n \to \infty}k/n = s \in [0,1]$, $a_{\ell}\in \mathbb{C}$ for $\ell=0,\ldots, L$ and $z \in \mathbb{C}$,  then
\begin{align*}
&\lim_{n\to \infty} \bigg[ \mathbb{E}_{x}^{k}\left(\mathbb{E}^{x}_{n-k} \left|\mathbb{E}^{y}\sum_{\ell=0}^{L}a_{\ell} \left(\frac{x_{1}+\ldots+x_{k}}{\sqrt{n}}+i \frac{y_{1}+\ldots+y_{k}}{\sqrt{n}}+z \left[\frac{x_{k+1}+\ldots+x_{n}}{\sqrt{n}}+i \frac{y_{k+1}+\ldots+y_{n}}{\sqrt{n}} \right]\right)^{\ell} \right|^{q} \right)^{p/q} -\\
&\mathbb{E}^{k} \left(\mathbb{E}_{n-k} \left| T_{z}^{k} \left(\sum_{\ell=0}^{L} a_{\ell}\varphi_{\ell}\left(\frac{x_{1}}{\sqrt{n}}, \ldots, \frac{x_{n}}{\sqrt{n}}\right)\right)\right|^{q}\right)^{p/q}\bigg] =0.
\end{align*}
\end{lemma}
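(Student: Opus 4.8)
\medskip
\noindent\emph{Proof proposal.}

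Write $F_1(x)$ for the inner function $\mathbb{E}^{y}\sum_{\ell=0}^{L}a_{\ell}(\cdots)^{\ell}$ appearing in the statement and $F_2(x):=T_{z}^{k}\!\big(\sum_{\ell=0}^{L}a_{\ell}\varphi_{\ell}(x_1/\sqrt n,\dots,x_n/\sqrt n)\big)=\sum_{\ell=0}^{L}a_\ell\,\varphi_\ell(x_1/\sqrt n,\dots,x_k/\sqrt n,zx_{k+1}/\sqrt n,\dots,zx_n/\sqrt n)$; set $u_i(x'):=\mathbb{E}_{n-k}|F_i(x',\cdot)|^{q}$ for $x'=(x_1,\dots,x_k)$ and $A_i:=\mathbb{E}^{k}u_i^{p/q}$, so that the claim is $A_1-A_2\to0$. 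The crucial input is Lemma~\ref{Tz}: summing it over $\ell\le L$ against $a_\ell$ shows that on the box $\mathcal G_R:=\{(x',x'')\,:\,x'\in B_k(R),\ x''\in B_{n-k}(R)\}$ one has $|F_1-F_2|\le C(L,z,\{a_\ell\})\,\varrho(R)\,n^{-1/2}$, where $\varrho$ is a fixed polynomial (of degree $\le L-1$): this is exactly what the proof of Lemma~\ref{Tz} delivers, its error being the Hermite remainders $k^{-(L-K)/2}\sum_\nu a_{\nu,K}H_\nu$ with $K\le L-1$, bounded by $O_L((R+1)^{L-1})$ on $B_k(R)$, together with lower order ``all-zeros'' contributions of size $O_L(n^{-1/2})$. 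I will carry out the case $s\in(0,1)$, so that $k\asymp n\asymp n-k$ and Lemma~\ref{Tz} applies; the endpoints $s\in\{0,1\}$ are handled identically, Lemma~\ref{Tz} being valid for every $1\le k\le n-1$.

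The plan is a truncation at a slowly growing scale $R=R(n)$, for which I record three ingredients. (a) \emph{Crude global bounds:} $|F_1|,|F_2|\le C(L,z,\{a_\ell\})\,n^{L/2}$ on all of $\{-1,1\}^n$ — immediate from $|\varphi_\ell|\le\ell!\binom n\ell\max(1,|z|)^\ell$ and from bounding the modulus of the linear form in $F_1$ by $\le\sqrt2\,(1+|z|)\sqrt n$ — whence $u_i\le Cn^{Lq/2}$ and $u_i^{p/q}\le Cn^{Lp/2}$. (b) \emph{Refined bounds on $\mathcal G_R$:} $|F_1|,|F_2|\le\Pi(R):=C(L,z,\{a_\ell\})(R+1)^{L}$ — for $F_2$ by Beckner's Lemma~\ref{BeL} applied to the two blocks, for $F_1$ by the same after averaging the $y$-part via $\mathbb{E}^y|y_1+\dots+y_m|^\ell\le C(\ell)m^{\ell/2}$, using $|(x_1+\dots+x_k)/\sqrt n|\le R$ and $|z(x_{k+1}+\dots+x_n)/\sqrt n|\le|z|R$ on $\mathcal G_R$. (c) \emph{Concentration:} by Hoeffding's inequality $x'$ lies outside $B_k(R)$ with probability $\le2e^{-R^2/2}$, and likewise $x''$ outside $B_{n-k}(R)$.

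With these in hand the argument runs as follows. Fix $R=R(n):=\sqrt{(Lq+2)\log n}$ and decompose $A_1-A_2$ according to whether $x'\in B_k(R)$; for fixed $x'$ decompose $u_i(x')=\mathbb{E}_{n-k}|F_i|^q$ further according to whether $x''\in B_{n-k}(R)$. On the ``tail'' pieces (where $x'\notin B_k(R)$, or $x'\in B_k(R)$ but $x''\notin B_{n-k}(R)$) bound the integrand by the crude bound (a fixed power of $n$, namely $n^{Lp/2}$ or $n^{Lq/2}$) and the event by $2e^{-R^2/2}=2n^{-(Lq+2)/2}$; the product is $O(n^{-1})\to0$ by the choice of $R$ and $p\le q$. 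On the remaining ``good box'' piece, Lemma~\ref{Tz} and (b) give $|F_1-F_2|\le C\varrho(R)n^{-1/2}$ and $|F_i|\le\Pi(R)$, so $\big||F_1|^q-|F_2|^q\big|\le q\max(|F_1|,|F_2|)^{q-1}|F_1-F_2|$ yields $|u_1(x')-u_2(x')|\le\eta_n:=q\,\Pi(R)^{q-1}C\varrho(R)n^{-1/2}+O(n^{-1})$ uniformly over $x'\in B_k(R)$, and $\eta_n\to0$ since $\Pi(R(n))^{q-1}\varrho(R(n))=(\log n)^{O(1)}$. Finally, because $p\le q$ the map $t\mapsto t^{p/q}$ is subadditive, so $|u_1^{p/q}-u_2^{p/q}|\le|u_1-u_2|^{p/q}\le\eta_n^{p/q}\to0$ on $B_k(R)$. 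Adding the pieces gives $A_1-A_2\to0$.

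The single delicate point is the clash, in ingredient (a), between a fixed power of $n$ and a tail probability that is only exponentially small in $R^2$: this is what forces $R$ to grow with $n$, and one must then check that the good-box error of Lemma~\ref{Tz}, now carrying the benign factor $\Pi(R(n))^{q-1}\varrho(R(n))=(\log n)^{O(1)}$, still beats $n^{-1/2}$ — which the logarithmic rate above arranges. (An alternative that keeps $R$ fixed and lets $n\to\infty$ first, $R\to\infty$ afterward, replaces the crude tail bounds in (a) by the uniform estimates $\sup_n\|F_i\|_{L^r(\{-1,1\}^n)}<\infty$ and $\sup_n\sup_{x'\in B_k(R)}\|F_i(x',\cdot)\|_{L^r(dx'')}<\infty$ for all $r<\infty$; these follow from hypercontractivity on the cube applied to the degree-$\le L$ polynomials $F_i$ together with the fact that $\|F_i\|_{L^2}$ is bounded uniformly in $n$, a short Parseval/Beckner computation. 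Either route closes the argument.)
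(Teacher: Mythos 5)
Your proof is correct, and it reaches the conclusion by a genuinely different implementation of the truncation idea. The paper also truncates the ``$x$''-blocks $\xi_n,\eta_n$ and invokes Lemma~\ref{Tz}, but it keeps the truncation levels $T$ and then $S=S(T)$ \emph{fixed independently of $n$}: the sub-gaussianity of the normalized sums makes the tail contributions $\varepsilon_T$, $Ae^{BT}\delta_S$ (and the corresponding ones for $Q$ vs.\ $Q'$) small \emph{uniformly in $n$}, so one may send $n\to\infty$ first with Lemma~\ref{Tz}'s $O_{T,S}(n^{-1/2})$ error as a black box, and only afterwards send $T,S\to\infty$. You instead let the truncation level $R(n)=\sqrt{(Lq+2)\log n}$ grow with $n$, pay for the tails with Hoeffding against the crude $n^{L/2}$ bound, and on the good box track explicitly that the error in Lemma~\ref{Tz} grows only polynomially in $R$ (via the Hermite remainders in its proof) so that $(\log n)^{O(1)}n^{-1/2}\to0$; the elementary inequality $|a^{p/q}-b^{p/q}|\le|a-b|^{p/q}$ then closes the argument. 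The trade-off is clear: the paper's order-of-limits scheme uses Lemma~\ref{Tz} purely as stated, whereas your diagonal scheme requires opening that lemma to verify the error is $O_L\!\big(\varrho(T,S)\,n^{-1/2}\big)$ with $\varrho$ a fixed polynomial (which is indeed what its proof delivers), but in exchange gives an explicit rate. One small caveat: Lemma~\ref{Tz} is stated under $k\asymp n$, so ``the endpoints are handled identically'' is a bit quick; for $s\in\{0,1\}$ one should argue directly (one of the two blocks degenerates and the relevant random variables vanish), though this does not affect the applications, which only use $s_1<s_2$ in $(0,1)$. Your parenthetical alternative --- fix $R$, send $n\to\infty$ first using uniform integrability of $F_i$ from cube hypercontractivity, then send $R\to\infty$ --- is in fact the variant closest in spirit to the paper's proof.
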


\begin{proof}

We have the sequences of random variables
\begin{align*}
&\xi_n = \frac{x_1+\dots +x_k}{\sqrt{n}} = \sqrt{s_n} \frac{x_1+\dots+ x_k}{\sqrt{k}}, \, s_n= k/n;\\
&\eta_n= \frac{x_{k+1}+\dots +x_n}{\sqrt{n}} =\sqrt{1- s_n} \frac{x_{k+1}+\dots +x_n}{\sqrt{n-k}};\\
&\zeta_n=  \frac{y_1+\dots +y_k}{\sqrt{n}} = \sqrt{s_n} \frac{y_1+\dots +y_k}{\sqrt{k}}, \, s_n= k/n;\\
&\tau_n=  \frac{y_{k+1}+\dots +y_n}{\sqrt{n}} = \sqrt{1-s_n} \frac{y_{k+1}+\dots +y_n}{\sqrt{n-k}}\,.
\end{align*}
For a fixed $n$ random variables $(\xi_n, \eta_n, \zeta_n, \tau_n)$ are pairwise independent. All these random variables are {\it uniformly} sub-gaussian.

Now $z$ is a complex number, and we consider $g(\xi_n+ i \zeta_n + z(\eta_n+i \tau_n))$ and $f_{n}(x)$
where
\begin{align*}
&g(w) := \sum_{\ell=0}^{L}a_{\ell}\,w^{\ell};\\
&f_{n}(x_{1}, \ldots, x_{n}) := \sum_{\ell=0}^{L} a_{\ell}\varphi_{\ell}\left(\frac{x_{1}}{\sqrt{n}}, \ldots, \frac{x_{n}}{\sqrt{n}}\right). 
\end{align*}
Obviously with $A, B<\infty$ (depending on $L$, $z$, and $\{a_\ell\}_{\ell=0}^{L}$ only)
\begin{equation}
\label{g1}
|g(\xi_n+ i \zeta_n + z(\eta_n+i \tau_n))| \le A e^{B(|\xi_n|+|\zeta_n|+|\eta_n|+ |\tau_n|)}\,.
\end{equation}
So if we consider (for $\beta\ge 1, \alpha\le 1$)
\begin{align*}
&\bE_\xi( \bE_\eta |\bE_{\zeta, \tau} g(\xi_n+ i \zeta_n + z(\eta_n+i \tau_n))|^{\beta})^\alpha\le \bE_\xi( \bE_\eta \bE_{\zeta, \tau} |g(\xi_n+ i \zeta_n + z(\eta_n+i \tau_n)|^{\beta})^\alpha\le \\
&A\bE_\xi( \bE_\eta \bE_{\zeta, \tau}e^{\beta B(|\xi_n|+|\zeta_n|+|\eta_n|+ |\tau_n|}))^\alpha= A\bE_\xi e^{\alpha\beta B(|\xi_n|}( \bE_\eta \bE_{\zeta, \tau}e^{\beta B|\zeta_n|}e^{\beta B|\eta_n|}e^{\beta B |\tau_n|}))^\alpha \le\\
&A\bE_\xi e^{\alpha\beta B|\xi_n|} (\bE_\eta e^{\beta B|\eta_n|} \bE_\zeta e^{\beta B|\zeta_n|}\bE_\tau e^{\beta B|\tau_n|})^\alpha\le A\bE_\xi e^{\alpha\beta B|\xi_n|} Const\,,
\end{align*}
where $Const$ depends only on sub-gaussian norms, but not on $n$. Moreover, as $\xi_n$ are also uniformly sub-gaussian, the above calculation shows that
\begin{equation}
\label{cut1}
\bE_\xi(\bE_\eta |\bE_{\zeta, \tau} g(\xi_n+ i \zeta_n + z(\eta_n+i \tau_n))|^{\beta})^\alpha= \bE_\xi {\bf 1}_{|\xi_n|\le T}(\bE_\eta |\bE_{\zeta, \tau} g(\xi_n+ i \zeta_n + z(\eta_n+i \tau_n))|^{\beta})^\alpha + \eps_T,
\end{equation}
where $\eps_T$ does not depend on $n$ and
$$
\lim_{T\to \infty} \eps_T=0\,.
$$
In the right hand side of \eqref{cut1} we can now  truncate  $\eta_n$. In fact, rewriting \eqref{g1} as
\begin{equation}
\label{g1}
{\bf 1}_{|\xi_n|\le T}|g(\xi_n+ i \zeta_n + z(\eta_n+i \tau_n))| \le A e^{BT}e^{B(|\zeta_n|+|\eta_n|+ |\tau_n|)}\,.
\end{equation}
we can continue \eqref{cut1}:
\begin{align}
\label{cut2}
&\bE_\xi {\bf 1}_{|\xi_n|\le T}(\bE_\eta |\bE_{\zeta, \tau} g(\xi_n+ i \zeta_n + z(\eta_n+i \tau_n))|^{\beta})^\alpha =\\
& \bE_\xi {\bf 1}_{|\xi_n|\le T}(\bE_\eta  {\bf 1}_{|\eta_n|\le S(T)}|\bE_{\zeta, \tau} g(\xi_n+ i \zeta_n + z(\eta_n+i \tau_n))|^{\beta})^\alpha + Ae^{BT} \delta_S\,,
\end{align}
where $\delta_S$ does not depend on $n$ and
$$
\lim_{S\to \infty} \delta_S=0\,.
$$

\medskip

Thus, choosing independently of $n$ a very large $T$ first and then very large $S$ to make $\eps_T$ and  then $Ae^{BT} \delta_S$ smaller than a given positive
number $\nu$,  we are now under the assumptions of   Lemma \ref{Tz}. 

Therefore, Lemma \ref{Tz}  claims that we have uniform in the values of $\xi_n\in [-T, T]$, $\eta_n \in [-S(T), S(T)]$ closeness of 
$\bE_{\zeta, \tau}g(\xi_n+ i \zeta_n + z(\eta_n+i \tau_n))$ and $T_z^k f_{n}$. Their difference does not exceed $C(L, \{a_\ell\}_{\ell=1}^L, T, S(T))/\sqrt{n}$, as Lemma \ref{Tz} shows.

In particular we proved that the difference between 
$$
P:=\bE_\xi{\bf 1}_{|\xi_n|\le T}( \bE_\eta {\bf 1}_{|\eta_n|\le S} |\bE_{\zeta, \tau} g(\xi_n+ i \zeta_n + z(\eta_n+i \tau_n))|^{\beta})^\alpha
$$
and 
$$
Q:=\bE_\xi{\bf 1}_{|\xi_n|\le T}\left( \bE_\eta {\bf 1}_{|\eta_n|\le S}\left|T_z^k f_{n}\right|
^\beta\right)^\alpha\,
$$
tends to zero as $n \to \infty$.

By the choice of large $T$ and then large $S=S(T)$ we made  the expression $P$ above as close as we wish to
$$
P':=\bE_\xi( \bE_\eta  |\bE_{\zeta, \tau} g(\xi_n+ i \zeta_n + z(\eta_n+i \tau_n))|^{\beta})^\alpha\,.
$$

We are left to see that by the choice of large $T$ and then large $S=S(T)$ we made  the expression $Q$ above as close as we wish to
$$
Q':=\bE_\xi( \bE_\eta |T_z^k f_{n}|
^\beta)^\alpha\,.
$$
Write $\bE_\xi( \bE_\eta |T_z^k f_{n}|^\beta)^\alpha= \bE_\xi( \bE_\eta | T_z^k(\sum_{\ell=0}^{L} a_{\ell} \varphi_{\ell}\left(\frac{x_{1}}{\sqrt{n}}, \ldots, \frac{x_{n}}{\sqrt{n}} \right))|^\beta)^\alpha$. We can use 
 Beckner's Lemma~\ref{BeL} again. All symmetric functions got replaced by  combination of Hermite polynomials. And then the fact that $\xi_n=\frac{x_1+\dots +x_k}{\sqrt{n}}$ and $\eta_n=\frac{x_{k+1}+\dots+x_n}{\sqrt{n}}$ are uniformly sub-gaussian allows us to make the truncation in $\xi_n$ and then in $\eta_n$ exactly as we did this before. Thus, by the choice of large $T$ and then $S$ expressions $Q$ and $Q'$ can be made as close as possible.

Lemma \ref{InAv} is proved.

\end{proof}

If we denote  $g(w) = \sum_{\ell=0}^{L}a_{\ell}w^{\ell}$ 
then letting $n \to \infty$ and keeping $\frac{k}{n}\to s$ we want to prove the following convergence: 
\begin{theorem}
\label{conv} 
Let $n \to \infty$ and $k$ is such that  $\frac{k}{n}\to s$, $s\in (0, 1)$. Then
\begin{align*}
& \mathbb{E}^{x}_{k}\left(\mathbb{E}^{x}_{n-k} \left|\sum_{\ell=0}^{L} a_{\ell} \varphi_{\ell}\left(\frac{x_{1}}{\sqrt{n}},\ldots, \frac{x_{k}}{\sqrt{n}}, z\frac{x_{k+1}}{\sqrt{n}} \ldots, z\frac{x_{n}}{\sqrt{n}} \right) \right|^{q} \right)^{p/q}\to\\
&\int_{\mathbb{R} }\left( \int_{\mathbb{R}} \left| \int_{\mathbb{R}} \int_{\mathbb{R}} g((u+iv)\sqrt{s} + z (x+iy)\sqrt{1-s}) d\gamma(v) d\gamma(y) \right|^{q} d\gamma(x)\right)^{p/q}d\gamma(u)\,.
\end{align*}
\end{theorem}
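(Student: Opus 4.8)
The plan is to combine Lemma~\ref{InAv} with the classical multidimensional central limit theorem, so that Theorem~\ref{conv} becomes essentially a bookkeeping exercise once the two ingredients are in place. Concretely, I would argue in two stages: first replace the left-hand side by the ``Gaussian-like'' discrete average that appears in Lemma~\ref{InAv}, and then let $n\to\infty$ in that average using weak convergence.

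\textit{Step 1: Reduce to the symmetrized quantity.} Recall from Lemma~\ref{InAv} (applied with $\beta=q\ge 1$ and $\alpha=p/q\le 1$) that the difference between
$$
\mathbb{E}^{k}_{x}\Bigl(\mathbb{E}^{x}_{n-k}\bigl|T_{z}^{k}\bigl(\textstyle\sum_{\ell=0}^{L}a_{\ell}\varphi_{\ell}(\tfrac{x_{1}}{\sqrt{n}},\dots,\tfrac{x_{n}}{\sqrt{n}})\bigr)\bigr|^{q}\Bigr)^{p/q}
$$
and
$$
\mathbb{E}^{k}_{x}\Bigl(\mathbb{E}^{x}_{n-k}\bigl|\mathbb{E}^{y}g(\xi_{n}+i\zeta_{n}+z(\eta_{n}+i\tau_{n}))\bigr|^{q}\Bigr)^{p/q}
$$
tends to $0$ as $n\to\infty$. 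Here I use that $T_{z}^{k}\varphi_{\ell}(\tfrac{x_1}{\sqrt n},\dots,\tfrac{x_n}{\sqrt n})=\varphi_{\ell}(\tfrac{x_1}{\sqrt n},\dots,\tfrac{x_k}{\sqrt n},z\tfrac{x_{k+1}}{\sqrt n},\dots,z\tfrac{x_n}{\sqrt n})$, so the first quantity is exactly the left-hand side of Theorem~\ref{conv}. Thus it suffices to show that the second (symmetrized) quantity converges to the claimed Gaussian integral.

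\textit{Step 2: Pass to the limit via the CLT.} By the central limit theorem the $4$-tuple $(\xi_{n},\eta_{n},\zeta_{n},\tau_{n})$, which for each fixed $n$ has independent coordinates (as noted in the proof of Lemma~\ref{InAv}), converges in distribution to $(\sqrt{s}\,U,\sqrt{1-s}\,X,\sqrt{s}\,V,\sqrt{1-s}\,Y)$ with $U,X,V,Y$ i.i.d.\ standard Gaussians; moreover all these variables are \emph{uniformly} sub-gaussian, which supplies the uniform integrability needed to upgrade weak convergence to convergence of the relevant integrals. I would first observe that the inner map
$$
(\xi,\eta)\longmapsto \mathbb{E}^{y}\,g\bigl(\xi+i\zeta_{n}+z(\eta+i\tau_{n})\bigr)
$$
is, for fixed $\xi,\eta$, a polynomial expression in $\xi,\eta$ whose coefficients are the Gaussian moments of $\zeta,\tau$ (up to a vanishing error, exactly the Hermite-type evaluation one gets by letting $\mathbb{E}^{y}$ of $(\cdot+i\zeta_n)^m$ become $H_m$); hence it converges, uniformly on compact $(\xi,\eta)$-sets and with sub-gaussian domination, to $\int\!\!\int g((\xi+iv)\,+z(\eta+iy))\,d\gamma(v)d\gamma(y)$ after the appropriate rescaling $\xi=\sqrt s\,u$, $\eta=\sqrt{1-s}\,x$. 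Then I would feed this through the two nested averages: the uniform sub-gaussian bounds from the proof of Lemma~\ref{InAv}, namely estimates of the form $\mathbb{E}_\xi(\mathbb{E}_\eta|\mathbb{E}_{\zeta,\tau}g|^{q})^{p/q}\le \text{Const}$ with a constant independent of $n$, justify interchanging limit and expectation (dominated convergence, using the truncation argument of Lemma~\ref{InAv} to handle the outer tails), yielding
$$
\mathbb{E}^{k}_{x}\Bigl(\mathbb{E}^{x}_{n-k}\bigl|\mathbb{E}^{y}g(\cdots)\bigr|^{q}\Bigr)^{p/q}\longrightarrow \int_{\mathbb{R}}\Bigl(\int_{\mathbb{R}}\Bigl|\int_{\mathbb{R}}\!\int_{\mathbb{R}} g((u+iv)\sqrt{s}+z(x+iy)\sqrt{1-s})\,d\gamma(v)d\gamma(y)\Bigr|^{q}d\gamma(x)\Bigr)^{p/q}d\gamma(u).
$$
Combining Steps 1 and 2 gives the theorem.

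\textit{Main obstacle.} The genuinely delicate point is justifying the interchange of the limit $n\to\infty$ with the two \emph{nested} $L^{p}$--$L^{q}$ averages, because the outer exponent $p/q$ can be less than $1$ so one cannot simply dominate termwise in a naive way; this is exactly why the truncation machinery ($\mathbf{1}_{|\xi_n|\le T}$, then $\mathbf{1}_{|\eta_n|\le S(T)}$) built in the proof of Lemma~\ref{InAv} is needed. Once one restricts to the truncated region, the integrands are bounded and converge uniformly (by Lemma~\ref{Tz} at the discrete level, and by the CLT plus uniform sub-gaussianity at the Gaussian level), and the error from removing the truncations is controlled uniformly in $n$ by $\eps_T+Ae^{BT}\delta_S$, which is made arbitrarily small before sending $n\to\infty$. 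Everything else — the polynomial/Hermite identities, the binomial split over the $z$-factor, and the final rescaling by $\sqrt s$ and $\sqrt{1-s}$ — is routine.
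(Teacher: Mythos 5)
Your proposal is correct and follows essentially the same route as the paper: first invoke Lemma~\ref{InAv} to replace the discrete $T_z^k$-average by the symmetrized quantity $\mathbb{E}_\xi(\mathbb{E}_\eta|\mathbb{E}_{\zeta,\tau}g|^q)^{p/q}$, then pass to the Gaussian limit by combining the truncation machinery (which handles the outer $p/q<1$ exponent and the tails) with the CLT applied on the truncated compact region, where the integrand is Lipschitz and the Rademacher-sum densities converge weakly to $\gamma$. The only cosmetic difference is that the paper expresses the final passage as iterated uniform approximation of $\int G\,d\mu_n$ by $\int G\,d\gamma$ for a fixed Lipschitz $G$ on a box, while you phrase it as dominated convergence plus uniform convergence on compacts; these are the same argument.
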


Before proving the theorem, notice the following
\begin{corollary}
\label{monot}
Let $g(x) = \sum_{\ell=0}^{L}a_{\ell}x^{\ell}$. Then the map
\begin{align*}
s \to \int_{\mathbb{R} }\left( \int_{\mathbb{R}} \left| \int_{\mathbb{R}} \int_{\mathbb{R}} g((u+iv)\sqrt{s} + z (x+iy)\sqrt{1-s}) d\gamma(v) d\gamma(y) \right|^{q} d\gamma(x)\right)^{p/q}d\gamma(u), \quad 0\leq s \leq 1,
\end{align*}
is monotonically increasing.
\end{corollary}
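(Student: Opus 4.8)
The plan is to obtain the monotonicity of the map in Corollary~\ref{monot} --- call it $s\mapsto I(s)$, where
$$
I(s):=\int_{\mathbb{R}}\bigg(\int_{\mathbb{R}}\Big|\int_{\mathbb{R}}\int_{\mathbb{R}} g\big((u+iv)\sqrt{s}+z(x+iy)\sqrt{1-s}\big)\,d\gamma(v)\,d\gamma(y)\Big|^{q}\,d\gamma(x)\bigg)^{p/q}d\gamma(u)\,,
$$
by simply passing to the limit in Beckner's discrete monotonicity \eqref{izrdeba}, using Theorem~\ref{conv} to identify the limits. First I would fix $0<s_1<s_2<1$ and pick integer sequences $k_1=k_1(n)$, $k_2=k_2(n)$ with $k_j(n)/n\to s_j$, e.g.\ $k_j(n)=\lfloor s_jn\rfloor$; since $s_1<s_2$ we have $k_1(n)\le k_2(n)$ for all large $n$, and both sequences satisfy $k_j(n)\asymp n$. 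Because $g$ is a polynomial, $f_n(x):=\sum_{\ell=0}^{L}a_\ell\varphi_\ell(x_1/\sqrt n,\dots,x_n/\sqrt n)$ is a genuine function on $\{-1,1\}^n$, so \eqref{izrdeba} applies to $f_n$: the map $k\mapsto\mathbb{E}^{k}(\mathbb{E}_{n-k}|T_z^{k}f_n|^{q})^{p/q}$ is nondecreasing on $\{0,\dots,n\}$, and in particular
$$
\mathbb{E}^{k_1}\big(\mathbb{E}_{n-k_1}|T_z^{k_1}f_n|^{q}\big)^{p/q}\ \le\ \mathbb{E}^{k_2}\big(\mathbb{E}_{n-k_2}|T_z^{k_2}f_n|^{q}\big)^{p/q}\qquad\text{for all large }n\,.
$$

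Next I would use the identity $T_z^{k}\varphi_\ell(x_1,\dots,x_n)=\varphi_\ell(x_1,\dots,x_k,zx_{k+1},\dots,zx_n)$ recalled in Section~\ref{Intro} to observe that $T_z^{k}f_n$ equals $\sum_{\ell}a_\ell\varphi_\ell(x_1/\sqrt n,\dots,x_k/\sqrt n,zx_{k+1}/\sqrt n,\dots,zx_n/\sqrt n)$. Hence the two sides of the displayed inequality are precisely the discrete expressions of Theorem~\ref{conv}, with $k=k_1(n)$ (ratio $\to s_1$) on the left and $k=k_2(n)$ (ratio $\to s_2$) on the right. Letting $n\to\infty$ and invoking Theorem~\ref{conv} on each side, the left-hand side tends to $I(s_1)$ and the right-hand side to $I(s_2)$; since a non-strict inequality survives passage to the limit, $I(s_1)\le I(s_2)$. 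This establishes monotonicity on the open interval $(0,1)$. To cover the endpoints $s\in\{0,1\}$ I would argue that $I$ is continuous on $[0,1]$: expanding $g$, the innermost integrand is a polynomial in $u,v,x,y$ whose coefficients are polynomials in $\sqrt s$ and $\sqrt{1-s}$, and it is dominated uniformly in $s\in[0,1]$ by a fixed function of the form $Ce^{c(|u|+|v|+|x|+|y|)}$, exactly the exponential bound used in the proof of Lemma~\ref{InAv}; dominated convergence then gives continuity in $s$ of the inner integral, and, applied again after taking $|\cdot|^{q}$ and then $(\cdot)^{p/q}$ (with the analogous fixed dominating functions), continuity of $I$ on $[0,1]$. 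Monotonicity on $(0,1)$ together with continuity on $[0,1]$ yields monotonicity on $[0,1]$.

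I do not expect a real obstacle here: all the analysis has already been done in Lemma~\ref{InAv} and Theorem~\ref{conv}, and what is left is a limiting argument. The two spots that need a sentence of care are (i) choosing $k_1(n)\le k_2(n)$ with the two prescribed limiting ratios, so that a single application of \eqref{izrdeba} controls both ends of the segment $[s_1,s_2]$ at once, and (ii) the endpoint continuity, which rests on the same uniform exponential domination. If one preferred to avoid the continuity argument, one could instead extend Theorem~\ref{conv} to $s\in\{0,1\}$ directly (taking $k_1(n)\equiv 0$ or $k_2(n)\equiv n$) by the same central-limit reasoning, but invoking continuity is shorter.
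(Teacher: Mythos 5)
Your proof is correct and follows the paper's approach exactly: apply Beckner's discrete monotonicity \eqref{izrdeba} to $f_n$ along $k_j(n)=\lfloor s_j n\rfloor$, identify $T_z^{k_j}f_n$ with the discrete quantity of Theorem~\ref{conv}, and pass to the limit. Your added continuity argument at the endpoints $s\in\{0,1\}$, which the paper leaves implicit since Theorem~\ref{conv} is stated only for $s\in(0,1)$, is a sound and welcome clarification.
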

\begin{proof}
Let $g(x) = \sum_{\ell=0}^{L}a_{\ell}x^{\ell}$, and
\begin{align*}
f_{n}(x_{1}, \ldots, x_{n}) = \sum_{\ell=0}^{L} a_{\ell} \varphi_{n, \ell}\left(\frac{x_{1}}{\sqrt{n}}, \ldots, \frac{x_{n}}{\sqrt{n}} \right)\,.
\end{align*}

\medskip

Choose $s_1<s_2$, put $k_1=[s_1 n], \,k_2=[s_2n]$.  By \cite{B}  for every $n$ we have  the monotonicity (\ref{izrdeba}):
\begin{equation}
\label{monotB}
\mathbb{E}^{k_1} (\mathbb{E}_{n-k_1} | T_{z}^{k_1} f_{n}|^{q})^{p/q} \le \mathbb{E}^{k_2} (\mathbb{E}_{n-k_2} | T_{z}^{k_2} f_{n}|^{q})^{p/q}\,.
\end{equation}
 Theorem~\ref{conv} and Lemma~\ref{InAv}  claim that the limits of expressions ($i=1,2$),  $\cL_i(n):=\mathbb{E}^{k_i} (\mathbb{E}_{n-k_i} | T_{z}^{k_i} f_{n}|^{q})^{p/q}$
exist, and thereby by (\ref{monotB}) we have 
$$
\lim_{n\to\infty} \cL_1(n) \le \lim_{n\to\infty} \cL_2(n)\,.
$$
 Moreover, Theorem \ref{conv} gives these limits of $\cL_i(n)$ as
$$
\int_{\mathbb{R} }\left( \int_{\mathbb{R}} \left| \int_{\mathbb{R}} \int_{\mathbb{R}} g((u+iv)\sqrt{s_i} + z (x+iy)\sqrt{1-s_i}) d\gamma(v) d\gamma(y) \right|^{q} d\gamma(x)\right)^{p/q}d\gamma(u),\, i=1, 2\,.
$$
The corollary is proved.
\end{proof}

\medskip

Now we are going to prove Theorem \ref{conv}.

\begin{proof}
By Lemma~\ref{InAv} it is enough to show that 
\begin{align*}
&  \mathbb{E}^{x}_{k}\left(\mathbb{E}^{x}_{n-k} \left|\mathbb{E}^{y}\sum_{\ell=0}^{L}a_{\ell} \left(\frac{x_{1}+\ldots+x_{k}}{\sqrt{n}}+i \frac{y_{1}+\ldots+y_{k}}{\sqrt{n}}+z \left[\frac{x_{k+1}+\ldots+x_{n}}{\sqrt{n}}+i \frac{y_{k+1}+\ldots+y_{n}}{\sqrt{n}} \right]\right)^{\ell} \right|^{q} \right)^{p/q}\to\\
&\int_{\mathbb{R} }\left( \int_{\mathbb{R}} \left| \int_{\mathbb{R}} \int_{\mathbb{R}} g((u+iv)\sqrt{s} + z (x+iy)\sqrt{1-s}) d\gamma(v) d\gamma(y) \right|^{q} d\gamma(x)\right)^{p/q}d\gamma(u)\,.
\end{align*}
We already saw from \eqref{cut2} that
\begin{align}
\label{cut3}
&\bE_\xi (\bE_\eta |\bE_{\zeta, \tau} g(\xi_n+ i \zeta_n + z(\eta_n+i \tau_n))|^{\beta})^\alpha =\\
& \bE_\xi {\bf 1}_{|\xi_n|\le T}(\bE_\eta | {\bf 1}_{|\eta_n|\le S(T)}\bE_{\zeta, \tau} g(\xi_n+ i \zeta_n + z(\eta_n+i \tau_n))|^{\beta})^\alpha + \varepsilon_T+Ae^{BT} \delta_S\,,
\end{align}

But we can continue truncating the variables. Now it is the turn of $\zeta_n$ and $\tau_n$. Using that $\zeta_n$ and $\tau_n$ are uniformly sub-gaussian we can write
\begin{align*}
&\bE_\xi(\bE_\eta |\bE_{\zeta, \tau} g(\xi_n+ i \zeta_n + z(\eta_n+i \tau_n))|^{\beta})^\alpha = \\
&\bE_\xi {\bf 1}_{|\xi_n|\le T}(\bE_\eta {\bf 1}_{|\eta_n|\le S} |\bE_{\zeta, \tau}{\bf 1}_{|\zeta_n|\le R(T,S), |\tau_n|\le P(T, S, R)} g(\xi_n+ i \zeta_n + z(\eta_n+i \tau_n))|^{\beta})^\alpha +\\
&(\eps_T+Ae^{BT}\delta_S + Ae^{B(T+S)}\gamma_R +Ae^{B(T+S+R)}\nu_P),
\end{align*}
where $\eps_T$, $\delta_S$, $\gamma_R$ and $\nu_P$ can be chosen to be small  in such order to  make $\eps_T+Ae^{BT}\delta_S + Ae^{B(T+S)}\gamma_R +Ae^{B(T+S+R)}\nu_P$ as small as we wish.

\medskip

Let $\mu_n$ be the density of distribution of $\zeta_n/\sqrt{s}$, and  $\mu'_n$ be the density of distribution of $\tau_n/\sqrt{1-s}$. Consider the Lipschitz function   $G(x_1, x_2, y_1, y_2) = g((x_1+ iy_1)\sqrt{s} + z\sqrt{1-s}(x_2+iy_2))$ defined on $[-T,T]\times[-S,S]\times[-P,P]\times[-R,R]$. It is clear that for sufficiently large $n$ we have 
\begin{align*}
\int_{-P}^{P}G(x_{1},x_{2},y_{1},y_{2}) d\mu_{n}(y_{1}) = \int_{-P}^{P}G(x_{1},x_{2},y_{1},y_{2})  d\gamma(y_{1}) + \varepsilon_{n}
\end{align*}
where $\varepsilon_{n}$ is small uniformly for all $(x_{2},y_{1},y_{2}) \in[-S,S]\times[-P,P]\times[-R,R]$. Iterating this observation we easily obtain that 
\begin{align*}
&\lim_{n \to \infty} \int_{-T}^T \left(\int_{-S}^S \left|\int_{-R}^R\int_{-P}^P G(x_1, x_2, y_1, y_2) d\mu_n(y_1)d\mu'_n(y_2)\right|^\beta d\mu'_n(x_2)\right)^\alpha d\mu_n(x_1) = \\
&\int_{-T}^T \left(\int_{-S}^S \left|\int_{-R}^R\int_{-P}^P G(x_1, x_2, y_1, y_2) d\gamma(y_1)d\gamma(y_2)\right|^\beta d\gamma(x_2)\right)^\alpha d\gamma(x_1)
\end{align*}
 This finishes the proof of the theorem. 
\end{proof}

\section{Concluding remarks: Hausdorff--Young inequality and Beckner--Janson flow}
\subsection{Hausdorff--Young inequality}
Let $1<p\leq 2$ and $q :=\frac{p}{p-1}$ be the conjugate exponent. We define the Fourier transform as  
\begin{align*}
\widehat{f}(x) = \int_{\mathbb{R}} f(y) e^{-2 \pi i x \cdot y} dy.
\end{align*}
Take any compactly supported $f$, and define the following map 
\begin{align}\label{map2}
\varphi(s) \stackrel{\mathrm{def}}{=} \left[\int_{\mathbb{R}} \left( \int_{\mathbb{R}} \left|\widehat{f(y) e^{-\frac{sy^{2}}{2p(1-s)}}} \right|^{q}\left(\frac{u}{2 \pi i p (1-s)} + x \right) dx\right)^{p/q}   \frac{e^{-\frac{u^{2}}{2s(1-s)}}}{\sqrt{2 \pi s (1-s)}}du\right]^{1/p}\quad \text{for} \quad 0<s<1.
\end{align}
We will explain that the monotonicity of (\ref{mon2}) for $z=i\sqrt{p-1}$ immediately implies that $\varphi(s)$ is increasing    
on the interval $(0,1)$, and also we will see that  
\begin{align}
&\varphi(0) = \| \widehat{f}\|_{L^{q}}, \label{lim1}\\
&\varphi(1) =  \sqrt{\frac{p^{1/p}}{q^{1/q}}} \cdot  \| f\|_{L^{p}}. 
\end{align}
These  conditions immediately provide us with the celebrated result of W.~Beckner \cite{B}, namely the Hausdorff--Young inequality with sharp constants
\begin{align*}
\|\widehat{f}\|_{L^{q}(\mathbb{R})} \leq \sqrt{\frac{p^{1/p}}{q^{1/q}}}\cdot  \| f\|_{L^{p}(\mathbb{R})}.
\end{align*}

Indeed, let 
\begin{align*}
g(x) = \sum_{\ell =0}^{N} a_{\ell} x^{\ell} \quad\text{and} \quad  \tilde{g}(x) = \sum_{\ell =0}^{N} a_{\ell} H_{\ell}(x),
\end{align*}
where 
\begin{align}\label{herm}
H_{\ell}(x) = \int_{\mathbb{R}} (x+iy)^{\ell} d\gamma(y)
\end{align}
 is the Hermite polynomial of degree $\ell$. 
Then notice that 
\begin{align}\label{kargi}
 \int_{\mathbb{R}}\int_{\mathbb{R}}g\left( z(x+iy)\sqrt{1-s}+(u+iv)\sqrt{s}\right)d\gamma(y) d\gamma(v) =
 M_{\sqrt{s+(1-s)z^{2}}} \tilde{g} \left(\frac{u\sqrt{s}+zx\sqrt{1-s}}{\sqrt{s+(1-s)z^{2}}} \right)
\end{align}
where $M_{w}f(x)$ is the Mehler semigroup 
\begin{align}\label{meh}
M_{w}\tilde{g}(x) =\sum_{\ell =0}^{N}  a_{\ell} w^{\ell} H_{\ell}(x)= \int_{\mathbb{R}} \tilde{g}(y) \frac{\exp\left(-\frac{(xw-y)^{2}}{2(1-w^{2})} \right)}{\sqrt{2 \pi (1-w^{2})}} dy, \quad |w| \leq 1\, \quad w \in \mathbb{C}. 
\end{align}
Indeed, by linearity this is enough to check only for $g(x)=x^{m}$, and use (\ref{herm}), (\ref{meh})  and  the identity 
\begin{align*}
\int_{\mathbb{R}} \int_{\mathbb{R}} P(z_{1} u + z_{2} v) d\gamma(u) d\gamma(v) = \int_{\mathbb{R}}P\left( x \sqrt{z_{1}^{2}+z_{2}^{2}}\right) d\gamma(x)
\end{align*}
for any complex polynomials $P(x)$ and any  $z_{1}, z_{2} \in \mathbb{C}$. Finally notice the relation between $M_{z} h(x)$ and the Fourier transform 
\begin{align}\label{fur2}
M_{w} h(x) = \frac{e^{-\frac{x^{2}w^{2}}{2(1-w^{2})}}}{\sqrt{2\pi (1-w^{2})}}\cdot \left(\widehat{he^{-\frac{y^{2}}{2(1-w^{2})}}} \right)\left(-\frac{xw}{2 \pi i(1-w^{2})}\right)
\end{align}
Therefore 
\begin{align}\label{ori}
M_{\sqrt{s+(1-s)z^{2}}} \tilde{g} \left(\frac{u\sqrt{s}+zx\sqrt{1-s}}{\sqrt{s+(1+s)z^{2}}} \right) =\frac{e^{-\frac{(u\sqrt{s}+zx\sqrt{1-s})^{2}}{2(1-s)(1-z^{2})}}}{\sqrt{2 \pi (1-s)(1-z^{2})}}\left( \widehat{ \tilde{g}e^{-\frac{y^{2}}{2(1-s)(1-z^{2})}}}\right)\left(-\frac{u\sqrt{s} + zx\sqrt{1-s}}{2 \pi i (1-s) (1-z^{2})}\right).
\end{align}

Finally taking $z=i\sqrt{p-1}$, 
\begin{align}\label{perexod} 
\tilde{g}(y) =f(y) e^{\frac{y^{2}}{2 p}} (2 \pi)^{1/2p}, 
\end{align}
  and combining (\ref{kargi}) and (\ref{ori})  we obtain 
\begin{align}
&\int_{\mathbb{R} }\left( \int_{\mathbb{R}} \left| \int_{\mathbb{R}} \int_{\mathbb{R}} g((u+iv)\sqrt{s} + z (x+iy)\sqrt{1-s}) d\gamma(v) d\gamma(y) \right|^{q} d\gamma(x)\right)^{p/q}d\gamma(u)=\nonumber\\
&\int_{\mathbb{R}}\left( \int_{\mathbb{R}}\left| M_{\sqrt{s+(1-s)z^{2}}} \tilde{g} \left(\frac{u\sqrt{s}+zx\sqrt{1-s}}{\sqrt{s+(1-s)z^{2}}} \right) \right|^{q} d\gamma(x) \right)^{p/q} d\gamma(u)\;\stackrel{(z=i\sqrt{p-1})}{=}\;(\varphi(s))^{p} \cdot \frac{q^{p/2q}}{p^{1/2}}. \label{bolo}
\end{align} 

Clearly 
\begin{align}
&\varphi(0)  \stackrel{(\ref{bolo})}{=}  \frac{p^{1/2p}}{q^{1/2q}} \|M_{i\sqrt{p-1}}\tilde{g}\|_{L^{q}(d\gamma)}\stackrel{(\ref{fur2}), (\ref{perexod})}{=}\|\widehat{f} \|_{q}; \label{limm1} \\
&\varphi(1) \stackrel{(\ref{bolo})}{=}  \frac{p^{1/2p}}{q^{1/2q}} \|M_{1} \tilde{g}\|_{L^{p}(d\gamma)} =\frac{p^{1/2p}}{q^{1/2q}} \|\tilde{g}\|_{L^{p}(d\gamma)} \stackrel{(\ref{perexod})}{=}\frac{p^{1/2p}}{q^{1/2q}}  \|f\|_{p}.\label{limm2}
\end{align}

\vskip1cm

\subsection{Beckner--Janson flow} Finally we would like to mention that the left hand side of (\ref{bolo}) for an arbitrary $|z|<1$ can be written as a composition of 3 heat flows. 
Consider the classical heat semigroup $P_{s} h$ with $\partial_{s} P_{s} h = \frac{\Delta}{2} P_{s} h$ and $P_{0}h = h$. We analytically extend the definition of $P_{s} h(x)$ to complex numbers  $s$ and $x$  as follows 
\begin{align*}
P_{s} h (x) = \int_{\mathbb{R}}h(t) \frac{e^{-\frac{(x-t)^{2}}{2s}}}{\sqrt{2 \pi s}}dt.
\end{align*}
Notice that in the exponent we have $(x-t)^{2}$ but not $|x-t|^{2}$ so that the extension is indeed analytic. Also $\sqrt{z}$ we understand in the sense of principal branch. When the test function $h$ has several variables, say $H(x+y)$ we will write $P_{s}^{y}$ to indicate in which variable we make the heat extension. After the direct calculation we obtain 
\begin{align}\label{magari}
\text{LHS of}\,  (\ref{bolo}) =    P_{s}^{u} \left( P_{1-s}^{x} |P_{(1-s)(1-z^{2})}\tilde{g}(u+zx)|^{q}(0) \right)^{p/q}(0).
\end{align}
To make sure the reader understands the notation let us explain one particular expression.  For example $P_{1-s}^{x} |P_{(1-s)(1-z^{2})}\tilde{g}(u+zx)|^{q}(0)$ means that we take the heat extension of $\tilde{g}$ at time $(1-s)(1-z^{2})$ and consider it at point $(u+zx)$. Then we take absolute value and rise it to the power $q$, and take the heat extension of the result in variable $x$, at time $1-s$ and at point $0$. 

Equality (\ref{magari}) follows from (\ref{bolo}) and the identity  
\begin{align}
P_{(1-s)(1-z^{2})}\tilde{g}(u+zx) = M_{\sqrt{s+(1-s)z^{2}}} \tilde{g} \left(\frac{u+zx}{\sqrt{s+(1-s)z^{2}}} \right) 
\end{align}
where $z$ can be arbitrary $|z|< 1$, $z \in \mathbb{C}$. By direct differentiation in $s$  it was checked in \cite{SJ, Hu} that the  mapping 
$$
s \mapsto P_{s}^{u} \left( P_{1-s}^{x} |P_{(1-s)(1-z^{2})}\tilde{g}(u+zx)|^{q}(0) \right)^{p/q}(0)
$$  
is increasing on $[0,1]$. 

\subsection{Another way to see that monotonicity (\ref{mon2}) implies Hausdorff--Young inequality}
\label{another}

Let 
$$
A_\zeta(x):= e^{\zeta x-\zeta^2/2}\,.
$$
\begin{lemma}
\label{A}
\begin{align*}
\frac1{\sqrt{2\pi}}\int_{\mathbb{R}} e^{\zeta (x+iy)} e^{-y^2/2} dy = A_\zeta(x)\,.
\end{align*}
\end{lemma}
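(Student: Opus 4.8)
The plan is to reduce the claim to the standard Gaussian normalization $\frac{1}{\sqrt{2\pi}}\int_{\mathbb{R}} e^{-y^2/2}\,dy = 1$ by completing the square and shifting the contour of integration. First I would pull the $y$-independent factor out: since $e^{\zeta(x+iy)} = e^{\zeta x}\, e^{i\zeta y}$, the left-hand side equals $e^{\zeta x}\cdot \frac{1}{\sqrt{2\pi}}\int_{\mathbb{R}} e^{i\zeta y - y^2/2}\,dy$. Then I would complete the square in the exponent, writing $i\zeta y - \tfrac{y^2}{2} = -\tfrac12 (y - i\zeta)^2 - \tfrac{\zeta^2}{2}$, so that the left-hand side becomes $e^{\zeta x - \zeta^2/2}\cdot \frac{1}{\sqrt{2\pi}}\int_{\mathbb{R}} e^{-\frac12 (y - i\zeta)^2}\,dy$. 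It then remains to show that the last integral equals $1$.

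For real $\zeta$ this is a routine contour-shift argument: the function $w \mapsto e^{-w^2/2}$ is entire and decays as $|\Re w|\to\infty$ uniformly in any horizontal strip, so integrating over the boundary of the rectangle with vertices $\pm N$ and $\pm N - i\zeta$ and letting $N\to\infty$ makes the two vertical sides vanish and gives $\int_{\mathbb{R}} e^{-\frac12(y-i\zeta)^2}\,dy = \int_{\mathbb{R}} e^{-y^2/2}\,dy = \sqrt{2\pi}$, which is exactly what is needed. To pass to arbitrary complex $\zeta$, I would invoke analytic continuation: for every $\zeta\in\mathbb{C}$ the integrand $e^{\zeta(x+iy)-y^2/2}$ is dominated in absolute value by $e^{(\Re\zeta)x}\, e^{|\zeta|\,|y|}\, e^{-y^2/2}$, which is integrable in $y$ locally uniformly in $\zeta$, so by differentiation under the integral sign (or Morera's theorem) the left-hand side is an entire function of $\zeta$; the right-hand side $A_\zeta(x) = e^{\zeta x - \zeta^2/2}$ is manifestly entire in $\zeta$; and the two entire functions agree on $\mathbb{R}$, hence everywhere.

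I do not expect any genuine obstacle here --- the only point deserving a word of care is the justification of the contour shift (equivalently, of the analytic-continuation step), and even that is standard. One can also avoid contour shifts altogether by noting that the $H_\ell(x) = \int_{\mathbb{R}}(x+iy)^\ell\,d\gamma(y)$ appearing in the paper are the Hermite polynomials, expanding $e^{\zeta(x+iy)} = \sum_{\ell\ge 0}\frac{\zeta^\ell}{\ell!}(x+iy)^\ell$, integrating term by term (legitimate by the uniform sub-gaussian bounds used above), and invoking the generating-function identity $\sum_{\ell\ge0}\frac{\zeta^\ell}{\ell!}H_\ell(x) = e^{\zeta x - \zeta^2/2}$; but the completing-the-square route is the shortest.
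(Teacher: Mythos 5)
Your proposal is correct and takes essentially the same approach as the paper, which dismisses the lemma with the phrase ``direct calculation'' --- namely, complete the square in the exponent and shift the contour (or analytically continue from real $\zeta$) to reduce to the Gaussian normalization. Your write-up merely fills in the routine details the paper elides.
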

\begin{proof}
Direct calculation.
\end{proof}

Take a test function 
$$
g(w) = \sum_{\ell=1}^L c_\ell e^{ t_\ell w },
$$
and consider ($s\in [0,1]$)
$$
\Phi_s(x, u) := \int_{\mathbb{R}}\int_{\mathbb{R}} g((x+iy)\sqrt{s} + z(u+iv)\sqrt{1-s}) d \gamma(y) d\gamma(v)\,.
$$
Then by Lemma \ref{A}
\begin{align*}
&\Phi_s(x, u) = \sum c_\ell A_{t_\ell \sqrt{s}}(x) A_{t_\ell z\sqrt{1-s}}(u),\\
&\Phi_1(\sqrt{2\pi p}x, u) = \sum c_\ell A_{t_\ell} (\sqrt{2\pi p} x)=\sum c_\ell e^{t_\ell \sqrt{2\pi p} x -t_\ell^2/2}, \\
& \Phi_0(x, \sqrt{2\pi q}u) = \sum c_\ell A_{z t_\ell} (\sqrt{2\pi q} u)=\sum c_\ell e^{i\sqrt{2\pi p} t_\ell u-\left(it_\ell\sqrt{\frac{p}{q}}\right)^2/2}\,.
\end{align*}
In the last inequality we substitute
$$
z=i\sqrt{p/q}\,.
$$
The connection with the Fourier transform is given by
\begin{lemma}
\label{F}
$$
\cF(e^{t_\ell \sqrt{2\pi p} x -t_\ell^2/2} e^{-\pi x^2}) = e^{-i\sqrt{2\pi p} t_\ell u-\left(it_\ell\sqrt{\frac{p}{q}}\right)^2/2}e^{-\pi u^2}\,.
$$
\end{lemma}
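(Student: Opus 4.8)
The plan is to verify Lemma \ref{F} by a direct Gaussian computation, exactly as the phrasing ``Direct calculation'' for Lemma \ref{A} suggests, and then to explain the structural role it plays. First I would recall the normalization $\cF(h)(u)=\int_{\mathbb{R}} h(x)e^{-2\pi i u\cdot x}\,dx$ fixed earlier in the paper, and observe that it suffices to prove the identity for a single exponential factor $e^{t_\ell\sqrt{2\pi p}\,x}$ multiplied by the Gaussian $e^{-\pi x^2}$, since both sides are linear in that data. Writing $a:=t_\ell\sqrt{2\pi p}$ and completing the square in
\[
\int_{\mathbb{R}} e^{ax}e^{-\pi x^2}e^{-2\pi i u x}\,dx
= e^{(a-2\pi i u)^2/(4\pi)}\int_{\mathbb{R}} e^{-\pi (x-(a-2\pi i u)/(2\pi))^2}\,dx
= e^{(a-2\pi i u)^2/(4\pi)},
\]
one expands the exponent: $(a-2\pi iu)^2/(4\pi) = a^2/(4\pi) - i a u - \pi u^2$. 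Now $a^2/(4\pi) = t_\ell^2\,2\pi p/(4\pi) = t_\ell^2 p/2$, while $-ia u = -i\sqrt{2\pi p}\,t_\ell u$. So the right side equals $e^{-i\sqrt{2\pi p}\,t_\ell u}\,e^{t_\ell^2 p/2}\,e^{-\pi u^2}$.

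Then I would match this against the claimed right-hand side. The factor $-(it_\ell\sqrt{p/q})^2/2$ equals $+t_\ell^2 (p/q)/2$, so the claimed exponent of the non-Gaussian, non-oscillatory part is $t_\ell^2 p/(2q)$, whereas my computation gave $t_\ell^2 p/2$; the missing piece is precisely the factor that was absorbed on the left when writing $e^{t_\ell\sqrt{2\pi p}\,x - t_\ell^2/2}$. Indeed $t_\ell^2 p/2 - t_\ell^2/2 = t_\ell^2(p-1)/2 = t_\ell^2 p/(2q)$ since $p-1 = p/q$ for conjugate exponents. Thus
\[
\cF\bigl(e^{t_\ell\sqrt{2\pi p}\,x - t_\ell^2/2}e^{-\pi x^2}\bigr)(u)
= e^{-t_\ell^2/2}\,e^{-i\sqrt{2\pi p}\,t_\ell u}\,e^{t_\ell^2 p/2}\,e^{-\pi u^2}
= e^{-i\sqrt{2\pi p}\,t_\ell u}\,e^{t_\ell^2 p/(2q)}\,e^{-\pi u^2},
\]
which is exactly $e^{-i\sqrt{2\pi p}\,t_\ell u - (it_\ell\sqrt{p/q})^2/2}e^{-\pi u^2}$, as asserted. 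Summing over $\ell$ with coefficients $c_\ell$ closes the lemma.

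The genuine content here is not the computation but the bookkeeping of which constants land where: the interplay between the two normalizations of the Fourier transform ($e^{-2\pi i ux}$ versus $e^{-iux}$), the Gaussian weight $e^{-\pi x^2}$ versus $e^{-x^2/2}$ used in $d\gamma$, and the choice $z = i\sqrt{p/q}$ that turns the real exponential $A_{t_\ell}(\sqrt{2\pi p}\,x)$ into the oscillatory kernel $A_{z t_\ell}(\sqrt{2\pi q}\,u)$. So the main obstacle is purely that of tracking these rescalings consistently; once the substitution $z = i\sqrt{p/q}$ and the identity $p-1 = p/q$ are in hand, everything collapses to completing the square. With Lemma \ref{F} established, $\Phi_1$ evaluated at $\sqrt{2\pi p}\,x$ against the Gaussian $e^{-\pi x^2}$ and $\Phi_0$ evaluated at $\sqrt{2\pi q}\,u$ against $e^{-\pi u^2}$ become a function and its Fourier transform, so that the endpoints $s=1$ and $s=0$ of the monotone quantity \eqref{mon2} reproduce $\|f\|_{L^p}$ (up to the sharp constant) and $\|\widehat f\|_{L^q}$ respectively, yielding the Hausdorff--Young inequality with Beckner's constant by density of the exponential sums $g$.
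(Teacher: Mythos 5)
Your proposal is correct and is precisely the ``Direct calculation'' that the paper leaves implicit: complete the square in the Gaussian Fourier integral, expand $(a-2\pi iu)^2/(4\pi)$ with $a=t_\ell\sqrt{2\pi p}$, and use $p-1=p/q$ to match the exponent $-(it_\ell\sqrt{p/q})^2/2$. The surrounding commentary about normalizations and the role of $z=i\sqrt{p/q}$ is accurate but goes beyond what the lemma's proof itself requires.
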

\begin{proof}
Direct calculation.
\end{proof}

Set
$$
\phi(s) := \frac1{\sqrt{2\pi}}\left(\int_{\mathbb{R}} \frac1{\sqrt{2\pi}}\int_{\mathbb{R}} |\Phi_s(x', u')|^q e^{-\frac{(u')^2}{2}} du'\right)^{p/q}e^{-\frac{(x')^2}{2}} dx'\,.
$$
Then the monotonicity \eqref{mon2} implies
\begin{equation}
\label{mon3}
\phi(0)\le \phi(1)\,.
\end{equation}

On the other hand, making change of variable $x'=\sqrt{2\pi p} x$ in $\phi(1)$ and $u'=-\sqrt{2\pi q} u$ in $\phi(0)$, we get
\begin{align*}
&\phi(0) = (\sqrt{q})^{p/q}\left( \int_{\mathbb{R}}  e^{-\pi u^2q} \left| \sum c_\ell  e^{-i\sqrt{2\pi p} t_\ell u-\left(it_\ell\sqrt{\frac{p}{q}}\right)^2/2}\right|^q du\right)^{p/q}=\\
 &(\sqrt{q})^{p/q} \left(\int_{\mathbb{R}}  \left|  e^{-\pi u^2 }\sum c_\ell  e^{-i\sqrt{2\pi q} t_\ell u-\left(it_\ell\sqrt{\frac{p}{q}}\right)^2/2}\right|^{q} du\right)^{p/q} = (\sqrt{q})^{p/q}\left(\int_{\mathbb{R}} \left| \cF(e^{-\pi x^2 }\sum c_\ell e^{t_\ell \sqrt{2\pi p} x -t_\ell^2/2} )(u)\right|^{q}du\right)^{p/q}
\end{align*}
by Lemma \ref{F}. At the same time
$
\phi(1)= \sqrt{p} \int_{\mathbb{R}} \left| e^{-\pi x^2 }\sum c_\ell e^{t_\ell \sqrt{2\pi p} x -t_\ell^2/2}\right|^p dx.
$
Comparing two last lines  and using \eqref{mon3} we get
$$
\|\cF(e^{-\pi x^2 }\sum c_\ell e^{t_\ell \sqrt{2\pi p} x -t_\ell^2/2} )\|_{q} \le \frac{\sqrt{p}^{1/p}}{\sqrt{q}^{1/q}} \|e^{-\pi x^2 }\sum c_\ell e^{t_\ell \sqrt{2\pi p} x -t_\ell^2/2}\|_p\,.
$$


\begin{thebibliography}{999}

\bibitem{B}{\sc W.~Beckner}, {\em Inequalities in Fourier analysis}, Ann. of Math., vol. 102, No.1,  (1975), pp. 159--182.

\bibitem{Hu} {\sc Y.~Hu}, {\em Analysis on Gaussian Spaces}, World Scientific, 2016.

\bibitem{SJ} {\sc S.~Janson}, {\em On complex hypercontractivity}. J. Funct. Anal. 151 (1997), no. 1,
270--280.

\bibitem{Weissler} {\sc F.~Weissler}, {\em Two-point inequalities, the Hermite semigroup, and the Gauss--Weierstrass semigroup}, J.~Funct. Anal. Vol. 31, (1979), 102--121.


\end{thebibliography}
\end{document}